\documentclass[reqno]{amsart}
\usepackage{newcent}       
\usepackage{helvet}         
\usepackage{courier}        
\usepackage{amsfonts}
\usepackage{amsfonts,amssymb,amsmath}
\usepackage[latin1]{inputenc}
\usepackage{color}
\usepackage{graphicx}
\usepackage[active]{srcltx}
\newtheorem{theorem}{Theorem}[section]

\newtheorem{definition}{Definition}
\newcommand{\mc}[1]{{\mathcal #1}}

\newcommand{\bb}[1]{{\mathbb #1}}

\newcommand{\pfrac}[2]{\genfrac{}{}{}{1}{#1}{#2}}

\newcommand{\ket}{\rangle}
\newcommand{\ldois}{\ell^2(\bb Z)}
\newcommand{\mais}{|\!\!+\!\!1\rangle}
\newcommand{\menos}{|\!\!-\!\!1\rangle}
\newcommand{\tn}{\lfloor tn\rfloor}

\title[Hydrodynamic limit of quantum random walks]{Hydrodynamic limit of quantum random walks}

\author{Alexandre Baraviera }
\address{UFRGS, Instituto de Matem\'atica, Campus do Vale, Av. Bento Gon\c calves, 9500. CEP 91509-900, Porto Alegre, Brasil}
\curraddr{}
\email{baravi@mat.ufrgs.br}
\thanks{Partially supported by CNPq, CAPES and FAPERGS}

\author{Tertuliano Franco}
\address{UFBA\\
 Instituto de Matem\'atica, Campus de Ondina, Av. Adhemar de Barros, S/N. CEP 40170-110\\
Salvador, Brasil}
\email{tertu@impa.br}
\thanks{Partially supported by PRODOC-UFBA}

\author{Adriana Neumann}
\address{UFRGS, Instituto de Matem\'atica, Campus do Vale, Av. Bento Gon\c calves, 9500. CEP 91509-900, Porto Alegre, Brasil}
\curraddr{}
\email{aneumann@impa.br}
\thanks{}


\begin{document}

\begin{abstract}
We present a proof of the hydrodynamic limit of independent quantum random walks evolving on $\mathbb{Z}$. The time evolution of the local equilibrium is driven by a convolution of the initial profile with a rescaled version of the limiting  probability density obtained in \cite{gjs} in the law of large numbers for a single quantum random walk.
\end{abstract}

\maketitle

\section{Introduction}\label{s1}

An important subject in Statistical Physics is the comprehension of the hydrodynamic behavior of interacting particle systems. Roughly speaking, the hydrodynamic of a discrete system that evolves in time consists of proving the limit of the time trajectory of its spatial density of particles as some parameters are rescaled. These parameters can be, for instance, time and space. To prove rigorously such scaling limit is often a mathematical problem of deep technical difficulties. As a classical book in the subject we cite \cite{kl} and references therein.

The hydrodynamic limit has been developed and successfully proved for many interacting particle systems since the seventies, as the symmetric (and the asymmetric) exclusion process, the zero range process, independent random walks, and many others.

In the paper \cite{adz}, it was proposed a model for a quantum random walk. That model gave
origin to a vast literature,
inspiring several connections with quantum optics and quantum computation. We cite the survey \cite{kempe} on the subject of quantum random walks.

In \cite{gjs}, the authors proved a law of large numbers for a single quantum random walk. Differently from the classical random walk, the limit for the quantum random walk, in the ballistic scale, is not a deterministic number, but a probability distribution. This is in some sense a consequence of the fact the quantum random walk evolves faster than its classical version, say, after the same number of steps the average distance from the starting
point of a quantum walk is larger than its classical counterpart.

In this paper, we obtain the hydrodynamic limit for a system of independent quantum random walks;
to the best of our knowledge, it is
the first time a hydrodynamic limit is rigorously  obtained for a quantum system.
It is supposed that each quantum random walk starts from a localized state, and the numbers of quantum random walks starting at each state is determined by independent Poisson random variables. The parameter of each Poisson is a function of space, what is called \emph{slowly varying parameter}, driven by a smooth profile $\gamma$ of compact support.

Under these assumptions, we prove that the limiting profile is driven by a convolution of the initial profile $\gamma$ with the probability density obtained in \cite{gjs} in the law of large numbers for a single quantum random walk.

It is of worth to mention that, if the initial profile has compact support, then the limiting profile at any positive will also have compact support. This contrasts with the hydrodynamic limit for classical random walks, where the limiting profile evolves according to the heat equation. For the heat equation, it is well known that the diffusion has infinite propagation speed. That is, even for initial profiles with compact support, for any positive time, the solution will be different of zero for any point in the space.

In short,  we deduce in this work that, while  quantum random walks are faster than classical random walks in the scale aspect, they are slower in the macroscopic diffusion aspect.

The outline of the paper is the following:
in Section \ref{s2}, the model and statements of results are presented. In Section \ref{s3}, the hydrodynamic limit is proved.

\section{Model and statements}\label{s2}
The quantum random walk, to be defined ahead, is seen at discrete times $n\in \bb N$. In agreement with the postulates of the
Quantum Mechanics, its state is an element of a Hilbert space. Consider for now the Hilbert space of square summable complex double-sided sequences, or else, the space
\begin{equation*}
\ldois \;:=\;\{(\ldots,x_{-2},x_{-1},x_0,x_1,x_2\ldots)\,;\,\forall k\in \bb Z\,,\, x_k\in\bb C\,,\,\textrm{and}\,  \sum_{k\in \bb Z}|x_k|^2<\infty\}\,.
\end{equation*}
Let $\{e_k\}_{z\in \bb Z}$ be the canonical basis of $\ldois$. Thus, if $x\in\ldois$, then
$x=\sum_{k\in \bb Z} x_ke_k$.


If the state of the random walk at some time $n\in \bb N$ is some $x\in \ldois$, this means that, if we observe the quantum  random walk, then we will see him at some position $k\in \bb Z$ with probability proportional to $|x_k|^2$. In other words, the state of the quantum random walk is not a position, neither a probability vector. It is indeed a vector belonging to some Hilbert space, which in turn gives the information of finding the particle
at some point after a position measurement is performed in the system.

The state space of the quantum random walk here considered will not be  $\ldois$, but the space $\ldois\otimes\bb C^2$ instead, the tensor product of the Hilbert spaces $\ldois$ and $\bb C^2$. The space $\bb C^2$  represents the state of a quantum coin that will
be used to drive the quantum random walk as explained below.


We make a break for a few words on tensor products. If $f_1:H_1\to \bb C$ and $f_2:H_2\to \bb C$ are two linear functionals over some vector spaces $H_1$ and $H_2$, the tensor product of $f_1$ and $f_2$ is the bilinear functional $f_1\otimes f_2: H_1\times H_2\to \bb C$  defined by
\begin{equation*}
(f_1\otimes f_2)(y_1,y_2)\;:=\;f_1(y_1)\cdot f_2(y_2)\,.
\end{equation*}
The main difference in considering the tensor product of two Hilbert spaces\footnote{Recall that the Riesz Representation Theorem assures we can always see a Hilbert space as a space of linear functionals over itself.} instead of considering the cartesian product of them is in the fact that the tensor product is bilinear, whilst the cartesian product is linear.

According to the classical notation in Quantum Mechanics, the canonical basis of $\bb C^2$ will be denoted by $\{ \menos, \mais  \}$. An element of this Hilbert space $\bb C^2$ is called a \emph{qubit};
as said before, the qubit here is interpreted  as the state of a coin.
Elements of $\ldois$ from now on will be denoted by $|x\ket$. If the state of the random walk is at, let us say,
\begin{equation*}
\frac{1}{\sqrt{2}}\;|x\ket\otimes \menos +\frac{i}{\sqrt{2}}\; |y\ket\otimes \mais\,,
\end{equation*}
\noindent it can be understood as the system being
in the \emph{superposition of  states} $|x\ket \otimes \menos$ and $|y\ket \otimes \mais$,
a usual terminology in Physics.

Again, this quantum coin is not a probability vector. Supposing that the norm is unitary, in order to
obtain the probability of finding a particle at some specific point in $\bb Z$
we take the element of $\ldois$ representing the state,
and consider
the square of the respective component in the basis. In the example above, taking the square of each  component's norm, we conclude that the probability of finding the particle in $|x\ket$ or in $|y\ket$ is one half for each one.

In the literature is common to denote the over-all state of the quantum object
by $\mc H_P\otimes \mc H_C$, where $\mc H_P$ is the Hilbert space associated with the position and  $\mc H_C$ is the Hilbert space
associated with the state of a certain coin.
In our case, the most standard, $\mc H_P$ is $\ldois$ and $\mc H_C$ is $\bb C^2$.

The dynamics over this state space $\ldois\otimes\bb   C^2$ has two parts. Informally, the first part consists of a unitary operator\footnote{Unitary matrix: its columns (or lines) compound an orthonormal basis for the space.} that acts on the  coin. The second part is a translation to the right or to the left on the element of $\ldois$ depending if the respective coin qubit is $\mais$ or $\menos$.

We recall that the general form of an operator in $U_2(\bb C)$, the set of unitary operators, is
\begin{equation*}
     \left[
   \begin{array}{cc}
     a    &    \bar{b}  \\
     b    &    -\bar{a}
   \end{array}
  \right]\,,
\end{equation*}
\noindent where $a$ and $b$ are complex numbers such that $|a|^2 + |b|^2=1$. In this work, we deal with a particular operator $H\in U_2(\bb C)$,  the \emph{Hadamard} operator, given by
\begin{equation}\label{eq001}
   H\;:=\;
  \frac{1}{\sqrt{2}}\left[
   \begin{array}{cc}
     1    &    1 \\
     1    &    -1
   \end{array}
  \right]\,,
\end{equation}
whose effect emulates the evolution of an unbiased coin: for example, if the initial
coin state is $\mais$ then, after the action of $H$, we get $\frac{1}{\sqrt{2}}(\mais+\menos)$, meaning
that we have one half probability of finding one of the two possible coin states after
a measurement.

Now, we define the part of the dynamics that acts on space. Let $\tau_m:\ldois\to\ldois$ be the shift to the right of $m\in\bb Z$, or else, if $x=\sum_{k\in \bb Z} x_ke_k$, then
\begin{equation*}
\tau_k x\;:=\; \sum_{k\in \bb Z} x_ke_{k+m}\,.
\end{equation*}
The linear operator $S:\ldois\otimes \bb C^2 \to \ldois\otimes \bb C^2$ is defined through
\begin{equation*}
S \Big(|x\ket \otimes \mais \Big)\;:=\; |\tau_1 x\ket \otimes \mais\;,\quad \forall x\in\ldois\,,
\end{equation*}
and
\begin{equation*}
S \Big(|x\ket \otimes \menos \Big)\;:=\; |\tau_{-1} x\ket \otimes \menos\;,\quad \forall x\in\ldois\,.
\end{equation*}
 Finally, denote $Id$ the identity operator over $\ldois$ and define $U:\ldois\otimes \bb C^2 \to \ldois\otimes \bb C^2$ by the composition
\begin{equation*}
 U \;: = \; S \circ (Id \otimes H)\,.
\end{equation*}
The dynamics is the following: if at time zero  the state is some $\psi\in\ldois\otimes \bb C^2$,
then the state at time $n=1$ is given by $U(\psi)$, and at arbitrary time $n\in \bb N$ is given by  $U^{(n)}(\psi)$.

To exemplify the model, in our case, where it is fixed the Hadamard operator $H$, for
\begin{equation*}
 \psi \;= \;|x\ket \otimes \mais + |y\ket \otimes \menos\,,
\end{equation*}
we have that
\begin{equation}\label{eq:01}
\begin{split}
U(\psi) & \;= \;S\Big(|x\ket \otimes H(\mais) + |y\ket \otimes H(\menos)\Big)\\
	  & \;= \;S\Big(|x\ket \otimes \Big(\pfrac{|+1\ket +|-1\ket}{\sqrt{2}}\Big) + |y\ket \otimes  \Big(\pfrac{|+1\ket -|-1\ket}{\sqrt{2}}\Big)\Big)\\
	  & \;= \;\frac{1}{\sqrt{2}}\Big[\Big(|\tau_1 x\ket+|\tau_1 y\ket\Big)  \otimes \mais + \Big(|\tau_{-1} x\ket-|\tau_{-1} y\ket\Big)  \otimes \menos\Big)\Big]\,.\\
\end{split}
\end{equation}
Notice that the dynamics is indeed deterministic. On the other hand, as aforementioned, the position of the particle after an observation is a random variable, being its distribution obtained from the state at the time of the observation. Therefore,  given an initial state $\psi\in  \ldois\otimes \bb C^2$, the state $U^{(n)}(\psi)$ obtained after $n$ iterations gives all the information about the distribution of where the particle is at time $n$ (if an observation is performed).

It is a common sense in Quantum Mechanics that the particle is not at any particular place before the observation. Only after the observation, and thus after the consequently random result, one can say that the particle is at some place
(more sophisticate interpretations are available but here we adopt this very pragmatic point of view).

Two remarks: although the Hilbert spaces are complex, if we multiply the initial state by a complex number, there is no changing in the particle space distribution at final time. Or else, for non-zero $\zeta\in \bb C$, the distributions of position at  time $n\in \bb N$, obtained from $U^{(n)}(\psi)$ and from $U^{(n)}(\zeta \psi)$ are the same. The role of complex numbers is noted in Quantum Mechanics  when  considering sums of states (leading to  phenomena known as \emph{interference}).
Secondly, the signs appearing in \eqref{eq:01} originate cancellations and hence a very peculiar behavior\footnote{In comparison with the classical random walk.} of the quantum random walk, as commented in Section \ref{s1}.

Next, we explain what we mean by a system of independent quantum random walks. For $\psi\in\ldois\otimes \bb C^2$, we have that
\begin{equation}\label{eq:02}
U^{(n)}(\psi)\;=\;\Big(\sum_{k\in\bb Z}x_k^n e_k\Big)\otimes \mais + \Big(\sum_{k\in\bb Z}y_k^n e_k\Big)\otimes \menos\,,
\end{equation}
where $x_k^n,y_k^n\in \bb C$ depend on $\psi$.
\begin{definition}\label{def:01}
For $\psi\in\ldois\otimes \bb C^2$ with unitary norm, let $X_n^\psi$ be a random variable assuming integers values  with distribution given by
\begin{equation*}
 \bb P\Big( X_n^\psi=k\Big)\;=\;|x_k^n|^2+|y_k^n|^2\,,\qquad\forall\, k\in \bb Z\,,
\end{equation*}
being $x_k^n,y_k^n$  those ones given in \eqref{eq:02}.
\end{definition}

Let $\Psi:{\ldois\otimes \bb C^2}\to\bb N$ be a configuration of quantum random walks, where
$\Psi(\psi)$ denotes how many quantum random walks are in the state $\psi\in\ldois\otimes\bb C^2$ for the configuration $\Psi$.
\begin{definition}\label{def:02}
Let $\Psi$ be a configuration of quantum random walks. For each state $\psi\in\ldois\otimes\bb C^2$, let
$X_n^\psi(1),\ldots,X^\psi_n(\Psi(\psi))$ be quantum random walks starting from the state $\psi$, all of them independent.
Define the function $\eta_n^\Psi:\bb Z\to\bb N$ by
\begin{equation}\label{eq03.1}
\eta_n^\Psi(k)\;=\;\sum_{\psi}\sum_{j=1}^{\Psi(\psi)}{\bf{1}}{[X_n^\psi(j)=k]}\,.
\end{equation}
where the sum in $\psi$ is taken over $\ldois\otimes\bb C^2$.
\end{definition}
The function $\eta_n^\Psi$ counts the quantity  of particles  at each site of $\bb Z$ after observations at time $n$ of each  quantum random walk, assuming that the system started from the configuration $\Psi$ of quantum random walks.

Of course, there is no reason to be finite the sum in \eqref{eq03.1}. For this reason, we are going to choose $\Psi$ having a finite number of quantum random walks, or else, $\Psi(\psi)=0$ except for a finite number of states $\psi$. This will allow future interchanging of sums and integrals.

Let $\gamma:\bb R\to \bb R_+$ be a fixed continuous profile with compact support.
\begin{definition}\label{def:03} Let
  $\Theta_\gamma^n$ be the product measure on $\bb N^{\ldois\otimes \bb C^2}$ which marginal at the state $\psi$ is a measure of type
\begin{equation}\label{eq04}
\begin{cases}
\textrm{Poisson}\Big(\frac{1}{2}\gamma(\frac{k}{n})\Big)\,,&\quad \textrm{ if }\, \psi= e_k\otimes \mais\,,\\
\textrm{Poisson}\Big(\frac{1}{2}\gamma(\frac{k}{n})\Big)\,,&\quad \textrm{ if }\, \psi= e_k\otimes \menos\,,\\
\delta_0\,,&\quad \textrm{ otherwise}\,.
\end{cases}
\end{equation}
\end{definition}
In words, if the state is of the form $\psi= e_k\otimes |\!+\!1\ket$ or $\psi= e_k\otimes |\!-\!1\ket$, then it is chosen a quantity of particles with this initial state $\psi$ through a Poisson measure, which parameter depends on $\gamma$. This is usually called a \emph{slowly varying parameter} in hydrodynamic limit, see \cite{kl}. No other initial states are allowed\footnote{This is not indispensable, but we chose this restriction for sake of clarity.}. Denote by $\bb E_{\Theta_\gamma^n}$ and $\bb P_{\Theta_\gamma^n}$ the probability and expectation, respectively,  induced by choosing of the initial state according to $\Theta_\gamma^n$. \medskip

Denote be $C_c(\bb R)$ the set of continuous functions $H:\bb R\to\bb R$ with compact support.
\begin{definition}\label{def:05}
Let  $f:\bb R\to\bb R_+$ be the probability density function given by
\begin{equation*}
 f(x)\;=\;\begin{cases}
           \frac{1}{\pi(1-x^2)\sqrt{1-2x^2}}\,,&\textrm{ if } x\in [-\frac{\sqrt{2}}{2},\frac{\sqrt{2}}{2}]\,,\\
           0\,,& \textrm{ otherwise}\,.
          \end{cases}
\end{equation*}
Denote also $f_t(x):=\pfrac{1}{t}f(\pfrac{x}{t})$.
\end{definition}
\noindent
We are in position to the state our main result. For $y\in \bb R$, denote by $\lfloor y\rfloor $ the greatest integer less or equal than $y$.
\begin{theorem}[Hydrodynamic limit of quantum random walks]\label{th:2.1}
Let $\eta_n^\Psi$ be the random element given in Definition \ref{def:02} with initial configuration $\Psi$  chosen according to the  measure $\Theta_\gamma^n$ given in Definition \ref{def:03}.

Then, for all $t> 0$, and for all $H\in C_c(\bb R)$,
\begin{equation*}
 \lim_{n\to\infty} \frac{1}{n} \sum_{k\in\bb Z}H(\pfrac{k}{n})\,\eta_{\lfloor t n\rfloor}^\Psi(k)\;=\; \int_{\bb R} H(x)\rho(t,x)dx
\end{equation*}
in probability, where the function $\rho: \bb R_+\times \bb R\to \bb R_+$ is given by
\begin{equation}\label{eq:03}
 \rho(t,x)\;=\;(\gamma *f_t)(x)\;:=\;\int_{\bb R} \gamma(y)\,\pfrac{1}{t}f(\pfrac{x-y}{t})\,dy \,,
\end{equation}
being $f$ given in the Definition \ref{def:05}.
\end{theorem}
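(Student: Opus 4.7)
The plan is to prove convergence in probability via Chebyshev's inequality: I will show that the mean of $M_n:=\frac{1}{n}\sum_{k}H(k/n)\eta_{\tn}^\Psi(k)$ converges to $\int H(x)\rho(t,x)\,dx$ and that its variance tends to zero. Because $\Psi$ is a product of independent Poisson variables under $\Theta_\gamma^n$, Wald's identity gives
\begin{equation*}
\bb E_{\Theta_\gamma^n}\bigl[\eta_{\tn}^\Psi(k)\bigr]
=\sum_{j\in\bb Z}\tfrac{\gamma(j/n)}{2}\Bigl(\bb P\bigl(X_{\tn}^{e_j\otimes\mais}=k\bigr)+\bb P\bigl(X_{\tn}^{e_j\otimes\menos}=k\bigr)\Bigr).
\end{equation*}
Spatial translation invariance of the operator $U$ implies $\bb P(X_{\tn}^{e_j\otimes\psi_0}=k)=p^\pm_n(k-j)$, where $p^\pm_n(m):=\bb P(X_{\tn}^{e_0\otimes\psi_0}=m)$ for $\psi_0\in\{\mais,\menos\}$. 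Substituting $m=k-j$ in $\bb E[M_n]$ decouples the sums into an outer Riemann sum in $j/n$ against $\gamma$ and an inner expectation $\tfrac12\bb E\bigl[H(X^+_{\tn}/n+j/n)+H(X^-_{\tn}/n+j/n)\bigr]$. The law of large numbers of \cite{gjs} applied to each pure initial coin state gives that $X^\pm_{\tn}/n$ converges in distribution to a random variable with density $f^\pm_t$; a direct check of the Konno formula then yields $\tfrac12(f^+_t+f^-_t)=f_t$, so the inner expectation converges to $\int H(x+j/n)f_t(x)\,dx$ and the outer Riemann sum to $\int\gamma(y)\int H(x+y)f_t(x)\,dx\,dy=\int H(z)(\gamma*f_t)(z)\,dz$, as required.

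For the variance, the decisive structural fact is that a product of independent Poisson inputs, colored by independent labels, remains a family of independent Poissons. Applied here, this gives that $\{\eta_{\tn}^\Psi(k):k\in\bb Z\}$ is a family of \emph{independent} Poisson random variables under $\bb P_{\Theta_\gamma^n}$, so mean and variance of each $\eta_{\tn}^\Psi(k)$ coincide. Consequently
\begin{equation*}
\mathrm{Var}_{\Theta_\gamma^n}(M_n)=\tfrac{1}{n^2}\sum_{k\in\bb Z}H(k/n)^2\,\bb E_{\Theta_\gamma^n}\bigl[\eta_{\tn}^\Psi(k)\bigr]=O(1/n),
\end{equation*}
the last estimate being a repetition of the first-moment computation with $H$ replaced by $H^2\in C_c(\bb R)$. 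Chebyshev's inequality then closes the argument.

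The main technical obstacle I anticipate is the rigorous passage to the limit in the outer Riemann sum over $j/n$: one needs the inner expectations $\tfrac12\bb E\bigl[H(X^\pm_{\tn}/n+y)\bigr]$ to converge to $\tfrac12\int H(x+y)f^\pm_t(x)\,dx$ uniformly for $y\in\mathrm{supp}(\gamma)$, not merely pointwise. This should follow from the compact support of $\gamma$, the uniform continuity and boundedness of $H$, and the fact that the pre-limit walks $X^\pm_{\tn}/n$ concentrate uniformly in $n$ on a bounded region (their support is contained in $[-\tn/n,\tn/n]$, and the weight outside any neighbourhood of the limiting support $[-\sqrt{2}/2,\sqrt{2}/2]\cdot t$ is vanishing). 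The only genuinely model-specific input is the symmetrization identity $\tfrac12(f^+_t+f^-_t)=f_t$; this is the reason the balanced initial Poisson measure of Definition \ref{def:03} is precisely what matches the symmetric density $f$ of Definition \ref{def:05}.
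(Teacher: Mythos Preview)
Your argument is correct and self-contained, but it follows a different route from the paper. The paper does not compute moments at all; instead it proves a stronger statement, the \emph{conservation of local equilibrium} (Theorem~\ref{th:2.2}): for every $x\in\bb R$ and $t>0$, the translated law $\tau_{\lfloor xn\rfloor}\mu^\Psi_{\lfloor tn\rfloor}$ converges weakly to the homogeneous Poisson product measure $\nu_{\rho(t,x)}$. This is obtained by computing the multivariate Laplace transform $\bb E_{\Theta_\gamma^n}\bigl[\exp(-\sum_k\lambda(k)\eta^\Psi_{\tn}(k))\bigr]$ directly, which factorizes and reveals that $\mu^\Psi_{\tn}$ is already a product of independent Poissons with site parameter $B(j,\tn)=\sum_k\gamma(k/n)p_{\tn}(k,j)$; the law of large numbers of \cite{gjs} then identifies the limit of $B(\lfloor xn\rfloor,\tn)$ as $\rho(t,x)$. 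The hydrodynamic limit itself is deduced in one line by citing the general implication ``local equilibrium $\Rightarrow$ hydrodynamic limit'' from \cite[Chapter~3]{kl}.

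The two proofs share the same engine---the product-Poisson structure of $\eta^\Psi_{\tn}$---but reach it differently: the paper derives it from the Laplace transform, whereas you invoke the Poisson coloring theorem. Your approach is more elementary (no appeal to \cite{kl}, no weak-convergence machinery) and isolates explicitly the symmetrization identity $\tfrac12(f_t^++f_t^-)=f_t$, which the paper handles implicitly by averaging the two coin states. The paper's approach, on the other hand, yields the stronger local-equilibrium statement essentially for free, and the uniformity issue you flag in your last paragraph never arises because the single-site parameter $B(\lfloor xn\rfloor,\tn)$ is analysed directly rather than through a double Riemann sum.
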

 We notice that the time $\lfloor t n\rfloor$ appearing  in the statement above corresponds to the so-called \emph{ballistic} scaling. In models where it appears $\lfloor t n^2\rfloor$ instead, it is called \emph{diffusive} scaling. Despite the Hadamard operator being an honest coin in some sense, the limit for a system of independent quantum random walks is driven in the ballistic scale, while for classical random walks it would be driven in the diffusive scale. This is an intrinsic characteristic of quantum random walks: because of cancellations aforementioned, they move faster than classical random walks.

 On the other hand, the time evolution of initial profile $\gamma$ according to $\gamma*f_t$ is somewhat slower than the equivalently time evolution obtained in the case of classical independent random walks, where the initial profile $\gamma$ would evolve through the heat equation's semigroup. For any positive time, the solution of heat equation starting from $\gamma$ is positive everywhere, which does
not happen with $\gamma*f_t$. Since $f$ and $\gamma$ have compact support, for any time $t>0$, the function $\gamma*f_t$ has compact support as well. This means that the diffusion of mass through the $\gamma*f_t$ has finite velocity of propagation, differently of the diffusion by means of the heat equation.

\section{Hydrodynamic limit}\label{s3}

We prove indeed something slightly stronger than the hydrodynamic limit stated in the Theorem \ref{th:2.1}. In this section, we obtain a result usually called as \emph{conservation of local equilibrium}, which is his hand implies the hydrodynamic limit as stated in the Theorem \ref{th:2.1}.

\subsection{Local equilibrium}
We begin with some topological considerations.
In the space $\bb N^{\bb Z}$ endowed with the distance
\begin{equation*}
d(\eta_1,\eta_2)\;=\; \sum_{k\in \bb Z}\frac{1}{2^{|k|}}\frac{|\eta_1(k)-\eta_2(k)|}{1+|\eta_1(k)-\eta_2(k)|}\,,
\end{equation*}
denote by $\{\tau_k\;;\; k\in \bb Z\}$ the group of translations. In others words, $\tau_k\eta$ is the configuration such that
\begin{equation*}
(\tau_k\eta)(j)\;=\;\eta(j+k)\,.
\end{equation*}
The action of the translation group is naturally extended to the space of probability measures on $\bb N^\bb Z$. For $k\in \bb Z$ and a probability measure $\mu$ on $\bb N^{\bb Z}$, we denote by $\tau_k\mu$ the unique probability measure such that
\begin{equation*}
 \int f(\eta)(\tau_k\mu)(d\eta)\;=\;\int f(\tau_k\eta)\mu(d\eta)
\end{equation*}
for all integrable continuous functions $f$ in the topology induced by the aforementioned distance.

For $\alpha>0$, define $\nu_\alpha$ as the probability product measure on $\bb N^{\bb Z}$ which marginals are Poisson probabilities measures of same parameter $\alpha>0$, i.e,
\begin{equation*}
\nu_{\alpha}\{\eta\;;\; \eta(k)=\ell\}\;=\; e^{-\alpha} \frac{\alpha^\ell}{\ell!}\,,
\end{equation*}
for any $k\in \bb Z$.

Informally, the conservation of local equilibrium says that under suitable hypothesis about the initial distribution of particles, the distribution of observed particles at time $\lfloor tn\rfloor$ is, in a asymptotically sense, locally a Poisson product measure which parameter is a function of the time and space.
Its statement is
\begin{theorem}[Conservation of local equilibrium]\label{th:2.2}
 Let $\mu^{\Psi}_{\lfloor tn\rfloor}$ be the probability measure on $\bb N^{\bb Z}$ induced by the random element $\eta^\Psi_{\lfloor tn\rfloor}$ given in the Definition \eqref{eq03.1}. Then, for any $x\in \bb R$ and any $t>0$,
 \begin{equation*}
 \lim_{n\to\infty} \tau_{\lfloor xn\rfloor} \mu^{\Psi}_{\lfloor tn\rfloor}\;=\;\nu_{\rho(t,x)}
 \end{equation*}
in the sense of weak convergence of probabilities measures, see the reference \cite{bill}, and $\rho(t,x)$ is defined in
\eqref{eq:03}.
 \end{theorem}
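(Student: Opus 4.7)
The strategy is to first show that $\mu^{\Psi}_{\lfloor tn\rfloor}$ is itself a product Poisson measure on $\bb N^{\bb Z}$, and then to show that its marginal parameters, after translation by $\lfloor xn\rfloor$, converge to $\rho(t,x)$.

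For the first step, since $\Theta^n_\gamma$ is a product of Poissons and distinct particles evolve independently, the standard displacement/thinning theorem for Poisson point processes applies. Concretely, for each $k\in\bb Z$ and $c\in\{-1,+1\}$, the Poisson$(\tfrac12\gamma(k/n))$ particles starting from $e_k\otimes|c\ket$ independently land at site $j$ at time $N:=\lfloor tn\rfloor$ with probability $p^{k,c}_n(j):=\bb P(X^{e_k\otimes|c\ket}_N=j)$. Thinning produces, for each $(k,c)$, independent Poisson$\bigl(\tfrac12\gamma(k/n)\,p^{k,c}_n(j)\bigr)$ counts across $j$; the contributions from different $(k,c)$ are themselves independent. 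Summing in $(k,c)$ therefore shows that $\mu^\Psi_N$ is the product of Poissons on $\bb N^{\bb Z}$ with marginal parameter
\begin{equation*}
\lambda_n(j)\;=\;\sum_{k\in\bb Z}\tfrac12\gamma(k/n)\bigl[p^{k,+}_n(j)+p^{k,-}_n(j)\bigr].
\end{equation*}

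For the second step, by translation invariance of the dynamics $p^{k,c}_n(j)=p^{0,c}_n(j-k)$. Let $\tilde X_N$ be the walk started from $e_0\otimes|c\ket$ with $c$ uniformly chosen in $\{-1,+1\}$, so that $\bb P(\tilde X_N=m)=\tfrac12[p^{0,+}_n(m)+p^{0,-}_n(m)]$. Then
\begin{equation*}
\lambda_n\bigl(\lfloor xn\rfloor+j\bigr)\;=\;\bb E\Bigl[\gamma\Bigl(\tfrac{\lfloor xn\rfloor+j-\tilde X_N}{n}\Bigr)\Bigr].
\end{equation*}
Since $(\lfloor xn\rfloor+j)/n\to x$ and $N/n\to t$, and since by \cite{gjs} the variable $\tilde X_N/N$ converges in distribution to a random variable $\tilde Z$ with density $f$, the bounded-continuous mapping theorem together with the uniform continuity of $\gamma$ yields $\lambda_n(\lfloor xn\rfloor+j)\to\bb E[\gamma(x-t\tilde Z)]=\int_{\bb R}\gamma(x-tz)f(z)\,dz=\rho(t,x)$, the last equality following by the change of variables $y=x-tz$. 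Since $\tau_{\lfloor xn\rfloor}\mu^\Psi_N$ is a product Poisson measure on $\bb N^{\bb Z}$ with all parameters converging to the common limit $\rho(t,x)$, every finite-dimensional marginal converges to the corresponding marginal of $\nu_{\rho(t,x)}$, which is precisely weak convergence in the product topology.

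The main technical point is the convergence of $\lambda_n(\lfloor xn\rfloor+j)$, which hinges on the weak law of large numbers of \cite{gjs}; the compact support and uniform continuity of $\gamma$ are what legitimize the interchange of limits with sums and expectation. The Poisson thinning in the first step is standard, but must be applied with care since particles originating at many distinct sites contribute to the same terminal site $j$; this is precisely the superposition principle for independent Poisson point processes, which holds here because, once observations are performed, the trajectories are classical i.i.d. random variables with the (quantum-mechanically determined) transition probabilities $p^{k,c}_n$.
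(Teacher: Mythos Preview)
Your argument is correct and follows the same two-step route as the paper: first show that $\mu^{\Psi}_{\lfloor tn\rfloor}$ is a product of Poissons, then use the law of large numbers from \cite{gjs} to identify the limiting parameter as $\rho(t,x)$. The differences are cosmetic. For the first step the paper carries out the Laplace-transform computation explicitly, which is nothing other than the proof of the Poisson thinning/superposition principle you invoke as a black box. For the second step the paper rewrites the parameter via the kernel symmetry $p_{\lfloor tn\rfloor}(k,j)=p_{\lfloor tn\rfloor}(j,k)$ together with the evenness of $f$, whereas you use spatial translation invariance $p^{k,c}_n(j)=p^{0,c}_n(j-k)$ and keep the two coin states separate through the mixture $\tilde X_N$. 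Your bookkeeping on the coin is in fact slightly cleaner: the paper asserts $\beta(e_k\otimes\mais)=\beta(e_k\otimes\menos)$ ``by the symmetry of the Hadamard operator,'' but the two single-coin distributions are reflections of one another rather than equal, so only their average enjoys the claimed symmetry --- and it is precisely this average that enters the Poisson parameter and that your $\tilde X_N$ encodes directly.
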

\begin{proof}
The weak convergence of probability measures on $\bb N^{\bb Z}$ is equivalent to the convergence of its finite dimensional distributions, see \cite{bill}. On his hand, the convergence finite dimensional distribution is characterized by the convergence of its Laplace transform. In this way, let $\lambda:\bb Z\to \bb R_+$ be a function such that $\lambda(k)\neq 0$ only for a finite subset of $\bb Z$.  We are therefore concerned with the Laplace transform
\begin{equation*}
\bb E_{\Theta_\gamma^n}\Big[\exp\Big(-\sum_{k\in \bb Z}\lambda(k)\eta^{\Psi}_{\lfloor tn\rfloor}(k)\Big)\Big]\,.
\end{equation*}
By \eqref{eq03.1}, we get that
\begin{equation*}
\begin{split}
\sum_{k\in \bb Z}\lambda(k)\eta^{\Psi}_{\lfloor tn\rfloor}(k) & \;=\;\sum_{k\in \bb Z}\lambda(k)\sum_{\psi}\sum_{j=1}^{\Psi(\psi)}{\bf{1}}{[X_{\tn}^\psi(j)=k]}\;=\;\sum_{\psi}\sum_{j=1}^{\Psi(\psi)}\lambda(X_{\tn}^\psi(j))\,,\\
\end{split}
\end{equation*}
being the sum in $\psi$ taken over $\ldois\otimes\bb C^2$.
Since the quantum random walks are taken as independent and by the choose of $\Psi$ ($\Psi(\psi)=0$ except for a finite number of states $\psi$), we have
\begin{equation}\label{eq:06}
\begin{split}
 & \bb E_{\Theta_\gamma^n}\Big[\exp\Big(-\sum_{k\in \bb Z}\lambda(k)\eta^{\Psi}_{\lfloor tn\rfloor}(k)\Big)\Big] \;=\;
 \prod_{\psi} \bb E_{\Theta_\gamma^n}\Big[\exp\Big(-\sum_{j=1}^{\Psi(\psi)}\lambda(X_{\tn}^\psi(j))\Big)\Big] \\
 & =\;
 \prod_{\psi} \int E \Big[\exp\Big(-\lambda(X_{\tn}^\psi(1))\Big)\Big]^{\Psi(\psi)} d\Theta_\gamma^n(\Psi)\,,
 \\
\end{split}
\end{equation}
where the expectation $E$ above is over a single quantum random walk. Denote
\begin{equation*}
\beta(\psi):= E \Big[\exp\Big(-\lambda(X_{\tn}^\psi(1))\Big)\Big]\,.
\end{equation*}
Under $\Theta_\gamma^n$, we have that $\Psi(\psi)$ has Poisson  distribution given by \eqref{eq04}. Hence,
\begin{equation*}
\int \beta(\psi)^{\Psi(\psi)} d\Theta_\gamma^n(\Psi)\;=\;
\begin{cases}
\exp\Big(\pfrac{1}{2}\gamma(\frac{k}{n})(\beta(\psi)-1)\Big)\,,&\quad \textrm{ if }\, \psi= e_k\otimes \mais\,,\\
\exp\Big(\pfrac{1}{2}\gamma(\frac{k}{n})(\beta(\psi)-1)\Big)\,,&\quad \textrm{ if }\, \psi= e_k\otimes \menos\,,\\
1\,,&\quad \textrm{ otherwise}\,.
\end{cases}
\end{equation*}
We notice that $\beta(\psi)=E\Big[\exp\Big(-\lambda(X_{\tn}^\psi(1))\Big)\Big]$ is the same for $\psi= e_k\otimes \mais$ or
$\psi= e_k\otimes \menos$. This is a consequence of the symmetry of the Hadamard operator defined in \eqref{eq001}. Since those states are the meaning ones, we can index only on them. Henceforth, denote
\begin{equation*}
\beta_k\;:= \;\beta(e_k\otimes \mais)\;=\; \beta(e_k\otimes \menos)\,.
\end{equation*}
Applying these considerations to  \eqref{eq:06}, it leads to
\begin{equation}\label{eq07}
\begin{split}
  \bb E_{\Theta_\gamma^n}\Big[\exp\Big(-\sum_{k\in \bb Z}\lambda(k)\eta^{\Psi}_{\lfloor tn\rfloor}(k)\Big)\Big] &\;=\;
 \prod_{k\in \bb Z} \exp\Big(\gamma(\pfrac{k}{n})(\beta_k-1)\Big) \\
 &\;=\;
  \exp\Big(\sum_{k\in \bb Z}\gamma(\pfrac{k}{n})(\beta_k-1)\Big)\,. \\
\end{split}
\end{equation}
Denote by $p_n(k,j)$ the probability of the quantum random walk, starting at $\psi=e_k\otimes \mais$ or
$\psi=e_k\otimes \menos$, be observed at the position $j\in \bb Z$ after a time $n$.
Thus, for $\psi=e_k\otimes \mais$ or $\psi=e_k\otimes \menos$,
\begin{equation*}
\beta_k\;=\;E \Big[\exp\Big(-\lambda(X_{\tn}^\psi(1))\Big)\Big]\;=\;\sum_{j\in \bb Z}e^{-\lambda(j)}p_{\tn}(k,j)\,.
\end{equation*}
Replacing the formula above for $\beta_k$ in \eqref{eq07} and changing the order of summation,
\begin{equation*}
\begin{split}
  \bb E_{\Theta_\gamma^n}\Big[\exp\Big(-\sum_{k\in \bb Z}\lambda(k)\eta^{\Psi}_{\lfloor tn\rfloor}(k)\Big)\Big] &\;=\;
\exp\Big(\sum_{j\in \bb Z}(e^{-\lambda(j)}-1)\sum_{k\in \bb Z}\gamma(\pfrac{k}{n})p_{\tn}(k,j)\Big)\,.\\
 \end{split}
\end{equation*}
The formula above characterizes the measure $\mu^{\Psi}_{\lfloor tn\rfloor}$ on $\bb N^{\bb Z}$ induced by $\eta^\Psi_{\lfloor tn\rfloor}$ as a product measure which marginal at $j\in\bb Z$ is a Poisson probability measure of parameter
\begin{equation*}
B(j,\tn)\;:=\;\sum_{k\in \bb Z}\gamma(\pfrac{k}{n})p_{\tn}(k,j)\;=\;\sum_{k\in \bb Z}\gamma(\pfrac{k}{n})p_{\tn}(j,k)\;=\;
E\Big[\gamma\Big(\frac{X^j_{\tn}}{n}\Big)\Big]\,,
\end{equation*}
being the interchanging of $j$ and $k$ above a consequence of  Hadamard operator's symmetry. Since $\gamma$ is smooth,
\begin{equation*}
\begin{split}
\lim_{n\to\infty}B(\lfloor xn\rfloor ,\lfloor t n\rfloor)\;=\;
\lim_{n\to\infty}E\Big[\gamma\Big(\frac{X^{\lfloor xn\rfloor}_{\lfloor t n\rfloor}}{n}\Big)\Big]&\;=\;
\lim_{n\to\infty}E\Big[\gamma\Big(\frac{t\,X^{\lfloor xn\rfloor}_{\lfloor t n\rfloor}}{tn}\Big)\Big]
\\
&\;=\; \int_{\bb R} \gamma(ty+x)\,f(y)\,dy \\
&\;=\; \int_{\bb R} \gamma(y)\,\pfrac{1}{t}f(\pfrac{x-y}{t})\,dy\,, \\
\end{split}
\end{equation*}
by the law of large numbers for quantum random walks, see \cite[Theorem 1]{gjs}, and the fact that $f$ is an even function. The limit above assures that
 \begin{equation*}
 \lim_{n\to\infty} \tau_{\lfloor xn\rfloor} \mu^{\Psi}_{\lfloor tn\rfloor}\;=\;\nu_{\rho(t,x)} \,,
 \end{equation*}
 finishing the proof.
\end{proof}

\begin{proof}[Proof of Theorem \ref{th:2.1}]
It is a known result that the conservation of local equilibrium, proved in Theorem \ref{th:2.1}, implies the hydrodynamic limit stated in the Theorem \ref{th:2.2}. See \cite[chapter 3]{kl}, for instance.
\end{proof}


\section*{Acknowledgements}
T.F would like to thank the Department of Mathematics of UFRGS, Porto Alegre - Brazil, where this work was initiated, by its hospitality.


\begin{thebibliography}{}
\bibitem{adz} Y.\ Aharonov, L.\ Davidovich, and N.\ Zagury. \emph{Quantum
random walks}. Phys. Rev. A, 48(2): p.1687-1690 (1993).

\bibitem{bill} P.\ Billingsley. \emph{Convergence of probability measures}.
 Wiley Series in Probability and Statistics: Probability and Statistics John Wiley \& Sons Inc., New York, Second edition (1999).

\bibitem{gjs} G.\ Grimmett, S.\ Janson, and P.\ Scudo. \emph{Weak limits of quantum random walks}.
Physical Review E, 69 (2004)









\bibitem{kempe} J.\ Kempe, \emph{Quantum random walks - an introductory overview}. Contemporary Physics, Vol. 44 (4), p.307-327 (2003)

\bibitem{kl} C.\ Kipnis, C.\ Landim. \emph{Scaling limits of interacting
  particle systems}. Grundlehren der Mathematischen Wissenschaften
  [Fundamental Principles of Mathematical Sciences], 320.
  Springer-Verlag, Berlin (1999).






\end{thebibliography}
\end{document}